\title{Choreography of divisors\\ on algebraic curves}
\date{}
\author{Oleg Viro}
\begin{document}
\maketitle

\section{Introduction}\label{s1}
\subsection{The initial question}\label{s1.1} 
Let $X$ be a non-singular real algebraic plane projective curve and let  
another real algebraic plane projective curve $Y$ cut on $X$ real points
$P_1$, \dots, $P_n$.

{\em How can $P_1$, \dots, $P_n$
move along $X$, when $Y$ changes continuously?}

We assume that $Y$ remains transversal to $X$ and the 
degree of $Y$ does not change. 
Then the number of real intersection 
points does not change.  In particular, the points do not collide 
with each other and move continuously along $X$.
It looks like a motion of an ensemble of dancers on a curve. 

If $X$ is a rational curve, then the answer is purely topological. Any 
continuous isotopy of the intersection can be obtained in this way. 
For curves of positive genus, there are obstructions. They appear both 
infinitesimally (see section \ref{s1.3}),
and globally (see  sections \ref{s1.5} and \ref{s4}).

\subsection{Generalizations to linear equivalence classes of
divisors}\label{s1.2}
Intersections of a plane projective curve $X$ with other plane projective 
curves $Y$ of a fixed degree form a single class of linear equivalent 
divisors on $X$. This is  quite a special class. 

The question that was posed in Section \ref{s1.1} makes sense for 
any class of linear equivalent divisors. Let $X$ be a non-singular real
algebraic projective curve, and $D=P_1+\dots+P_n$ be a simple real divisor 
on $X$.

{\em How can $D$ move on $X$ continuously in its linear equivalence class?}
We assume that $D$ remains simple and real during the movement.
  
%

 %

\subsection{The velocities of points}\label{s1.3}
Linear equivalence classes of divisors are described by the Abel-Jacobi
theorem. It provides also a complete  
{\em infinitesimal answer\/} to the question on a movement: 

If the movement is smooth, then each point of a moving divisor
has a velocity vector. It follows from Abel-Jacobi Theorem that 

{\it a moving divisor does not leave its linear equivalence class 
if and only if, for any holomorphic differential form on the curve,  
the sum of values of the form on the velocity
vectors over all the points of the divisor vanishes.} 

Global restrictions on motions of a simple real divisor, which we are going
to study, look quite differently. 

\subsection{Topological choreography}\label{s1.4} If the 
algebraic restrictions on a moving configuration of points are dismissed 
(i.e., the points are not required to be cut by other curve or form a 
divisor linear equivalent to the same divisor), then a complete 
description of possible motions becomes easy. 

Let $T$ be a closed 1-manifold.  Connected components of $T$ are 
homeomorphic to the circle. Denote them by $T_1$, \dots, $T_r$.  
Consider the set $\UConf_d(T)$ of unordered 
configurations $\{P_1,\dots,P_d\}$ of $d$ pairwise distinct points in $T$. 
The set $\UConf_d(T)$ has a natural topology.
A connected component of  $\UConf_d(T)$ is determined by a distribution of 
points $P_1$, \dots, $P_d$ in components $T_1$, \dots, $T_r$ of $T$.

Consider a motion of  $\{P_1,\dots,P_d\}\in\UConf_d(T) $ which is closed in
the sense that the final configuration of points coincides with the initial 
one (although the points may shuffle).
In other words, consider a loop $L$ in  $\UConf_d(T)$. 

The paths which are traced by individual points of a configuration 
constitute a 1-cycle on $T$. Its 
homology class belongs to $H_1(T)$. We will call this homology class 
the {\em tracing\/} of $L$ and denote it by $\tr(L)$. It does not change
under a homotopy of $L$.    

The homology  $H_1(T)$ is a free abelian group generated by
the fundamental classes $[T_1]$, \dots, $[T_r]$ of the components. 
Hence $\tr(L)=\sum_{k=1}^rc_k[T_k]$, for some $c_k\in\Z$.


{\bf Homotopy classification of loops in $\UConf_d(T)$.}
{\it  A loop $L$ \ in \  $\UConf_d(T)$ which begins at $\{P_1,\dots,P_d\}$ 
is defined up to homotopy by 
the distribution of points \ $P_1$, \dots, $P_d$ \ in \ $T_1$, \dots,
$T_r$
and  tracing $\tr(L)=\sum_{k=1}^rc_k[T_k]\in H_1(T)$.   
The only restriction on $\tr(L)$ is that if \ 
$T_k$ contains no points from $\{P_1,\dots,P_d\}$, then $c_k=0$.}

The proof is straightforward.\qed 

\subsection{Choreography of divisors}\label{s1.5}
For loops in the space of real simple divisors linear equivalent
to each other, the tracing satisfies many extra restrictions, which 
depend of the curve and divisor.  We postpone the detailed statements till section \ref{s4}. They are 
the main results of this paper.

\subsection{Acknowledgements}\label{s1.6}
The first interesting special
case  emerges, when a plane projective M-cubic is cut by another cubic. 
It was considered by Ay\c{s}eg\"ul \"Ozt\"urkalan
\cite{Ay}.
She relied on a completely different technique, elliptic addition on an 
M-cubic. As is shown below, most of her results do not require
elliptic addition and hold true in much more general situation. However, it
was her results that have inspired and motivated the present work.

I am grateful to Grisha Mikhalkin, John Milnor, Kristin Shaw, Dennis
Sullivan and Julia Viro for valuable remarks and suggestions.

\section{Preliminary notions}\label{s2}

\subsection{A real curve}\label{s2.1} 
In this paper almost everything 
happens on an {\em algebraic curve\/} $X$. 
We will focus on the situations 
when this ambient curve is {\em non-singular, irreducible and
projective\/}.  In this section, we assume also that the ambient curve is 
{\em real.\/}     

We do not assume that $X$ is planar or embedded in any specific standard
algebraic variety. Although the curve is assumed to be projective, the 
ambient projective space will not play any r\^ole.  
We would rather think of $X$ as an abstract complete curve. 
The set of its real points is denoted by $\R X$, the set of its complex
points, by $\C X$. Projectivity of $X$ implies that both $\C X$ and 
$\R X$ are compact. 

From analytical perspective, $\C X$ is a closed 
non-singular Riemann surface with an anti-holomorphic involution $\conj$, 
and $\R X$ is the  fixed point set of $\conj$.  


\subsection{Complex topological characteristics of a real curve}\label{s2.2}
A curve $X$ is said to be of {\em type I\/} if $\R X$ bounds in $\C X$ (i.e., 
$\R X$ realizes $0\in H_2(\C X;\Z/{2\Z})$). Otherwise $X$ is said to be of
{\em type II\/}. 

If $X$ is of type I, then $\C X\sminus \R X$ consists of two connected 
components homeomorphic to each other and interchanged by $\conj$. The
orientation of a connected component of $\C X\sminus \R X$ defines an
orientation on $\R X$ (as on the boundary of its closure). 
The other connected component of $\C X\sminus\R X$
defines the opposite orientation on $\R X$. These orientations are called
{\em complex orientations\/} because they come from the orientation of 
$\C X$, which is determined by the complex structure of $\C X$. 

A connected component $X_i$ of $\R X$ is homeomorphic to circle.  
If $X$ is of type I, then a complex orientation of $\R X$ defines 
orientations of $X_i$. With these orientations, the components define 
the basis $[X_1],\dots,[X_r]$ of $H_1(\R X)$. The sum $[X_1]+\dots+[X_r]$
of these generators is called a {\em complex orientation class\/} of $X$.
It defines and is defined by a complex orientation. The
inclusion homomorphism $H_1(\R X)\to H_1(\C X)$ has an infinite cyclic
kernel generated by a complex orientation class. 

If $X$ is of type II, then $\C X\sminus\R X$ is connected and the inclusion
homomorphism $H_1(\R X)\to H_1(\C X)$ is injective.

The types were introduced by Felix Klein \cite{Kl} in 1876. The complex
orientations of type I curves were introduced by V.~A.~Rokhlin \cite{R1}, 
\cite{R2}.

\subsection{Real divisors}\label{s2.3}
Let $D$ be a formal linear combination $n_1P_1+\dots+n_lP_l$ of points 
$P_1,\dots,P_l\in \C X$ with {\em positive integer\/} coefficients
$n_1,\dots,n_l$. In other words, let $D$ be an effective divisor on $X$.
Recall that the natural number $n_1+\dots+n_l$ is called the {\em degree\/}
of divisor $D=n_1P_1+\dots+n_lP_l$ and denoted by $\deg D$. 
The set $\{P_1,\dots,P_l\}$ is called
the {\em support\/} of $D$ and denoted by $\supp D$.

A divisor $D=n_1P_1+\dots+n_lP_l$ is said to be {\em real\/} if it is 
invariant under $\conj$. This means that each $P_i$ is either real (i.e.,
$P_i=\conj P_i$), or $D$ contains together with the summand $n_iP_i$ 
also the conjugate summand $n_jP_j=n_i\conj P_i$.  

A real effective divisor
$D$ admits a unique decomposition $D=D'+D''$ into an effective divisor 
$D'$ with $\supp D'\subset\R X$ 
and an effective divisor $D''$ with 
$\R X\cap\supp D''=\varnothing$. Then $D'$ is called the {\em real
part\/} of $D$ and denoted by $\R D$, and $D''$ is called the
{\em imaginary part\/} of $D$ and denoted by $\Im D$.
The degree of $\R D$ is called {\em real
degree\/} of $D$. If the real degree of $D$ equals degree (i.e., if
$D=\R D$ and $\Im D=0$), then $D$ is said to be {\em purely real\/}.

\subsection{Divisors form a space}\label{s2.4}
 An effective divisor of degree $d$  
can be identified with a point of the {\em $d$-th symmetric power\/} 
$\Sym^d(\C X)$. Recall that the $d$th symmetric power $\Sym^dT$ of a 
topological space 
$T$ is the quotient space of the Cartesian power $T^d$ under the natural 
action of the symmetric group $\mathbb S_d$.

As is well-known, {\it the space $\Sym^d(\C X)$ is a manifold of 
dimension $2d$ 
and can be equipped with a complex structure
such that the natural projection $(\C X)^d\to\Sym^d(\C X)$ is
holomorphic.\/} 
This can be easily shown using 
a well-known bijection $\Sym^d\C\to\C^d$ which maps 
$\{x_1,\dots,x_d\}\in\Sym^d\C$ to $(a_1,\dots,a_d)\in\C^d$ such that
$(x-x_1)\dots(x-x_d)=x^d+a_1x^{d-1}+\dots+a_d.$ 

\subsection{Linear equivalence of divisors}\label{s2.5}
Remind that some divisors $D=\sum_{k=1}^ln_kP_k$ on $\C X$ can be 
presented as $f^*(v)$ for a regular (i.e., holomorphic) map  
$f:\C X\to\C P^1$ and $v\in\C P^1$. This means that 
$f^{-1}(v)=\{P_1,\dots,P_l\}$ and 
$n_k$ is the multiplicity of $P_k$ as a root of equation $f(x)=v$ 
for each $k$. 

Recall that effective divisors $D$ and $E$ are said to be 
{\em linear equivalent\/}
if there exists a regular map $f:\C X\to\C P^1$ such that
$f^*(v)+E=D+f^*(w)$  for some $v,w\in \C P^1$. Linear equivalence is indeed
an equivalence relation, see e.g., \cite{Sh}. 

Denote by $\Leq(D)$ the set of 
effective divisors on $X$ linearly equivalent to an effective 
divisor $D$. The set $\Leq(D)$ is a subvariety of $\Sym^{\deg D}\C X$ 
isomorphic to a finite dimensional complex projective space, see e.g., 
\cite{Sh}.

\subsection{Linear equivalence of real divisors}\label{s2.6}
If $D$ is a real effective divisor on $X$, 
then $\conj$ induces an anti-holomorphic involution on $\Leq(D)$ and turns 
it into a real algebraic variety. 

The subset of $\Leq(D)$ which is formed 
by non-simple divisors is a hypersurface, it is called the {\em discriminant
hypersurface\/} and denoted by $\GS \Leq(D)$. 

\subsection{Deformations of real divisors}\label{s2.7}
Let  $D$ be a simple real divisor on $X$. 
The connected component of $\R \Leq(D)\sminus\GS \Leq(D)$ which contains $D$ 
is denoted by $\Def(D)$.
It can be described as the space of {\em simple real\/} divisors on $X$ which are
linearly equivalent to $D$ and can be connected to $D$ by {\em a continuous 
family of simple real divisors.\/} A path in $\Def(D)$ gives rise to an 
isotopy 
of $\supp D$ in the process of which $\supp\R D$ moves in $\R X$, while
$\supp\Im D$ moves in 
$\C X\sminus \R X$.\medskip

\noindent{\itshape\bfseries Example.}
Let $X$ be a real plane curve, and $Y$ another real plane curve transversal 
to $X$. Let $D$ be a simple divisor on $X$ which is cut by $Y$ on $X$, that
is $\supp D=\C X\cap\C Y$.
Then for each $E\in \Def(D)$ the set $\supp E$ is a 
transversal intersection of $\C X$ with $\C Y'$, where $Y'$ is a plane curve 
of the same degree as $Y$ and $\C X\cap\C Y'$ is isotopic to $\C X\cap\C Y'$
in the space of finite subsets of $\C X$ invariant under $\conj$.

\section{Tracings of a real moving simple divisor}\label{s3}
\subsection{Real and imaginary tracing maps}\label{s3.1}
Let $D$ be a simple real divisor in $X$ with 
$\R D=\sum_{k=1}^nP_k$ and $\Im D=\sum_{k=1}^{m}(P_{n+k}+\conj P_{n+k})$. 

Let $L:[0,1]\to \Def(D)$ be a loop with $L(0)=L(1)=D$. 
Then there are uniquely defined  
paths $L_1,\dots,L_n:I\to\R X$ and $L_{n+1},\dots,L_{n+m}:I\to\C X\sminus\R
X$ such that $L(t)=\sum_{k=1}^{n}L_k(t)+\sum_{k=1}^{m}(L_{n+k}(t)+\conj
L_{n+k}(t))$ 
for any $t\in I$. 

Each of these paths can be considered as a singular 1-simplex. 
Notice that singular chains $L_1+\dots+L_n$ and
$(L_{n+1}+\conj\circ L_{n+1})+\dots+(L_{n+m}+\conj\circ L_{n+m})$ are 
cycles, because each of
the paths starts at a point where exactly one of the paths finishes. 

The cycle $L_1+\dots+L_n$ determines a homology class which is an 
element of $H_1(\R X)$, the cycle 
$(L_{n+1}+\conj\circ L_{n+1})+\dots+(L_{n+m}+\conj\circ L_{n+m})$
determines a homology class which is an element of $H_1(\C X\sminus\R X)$.
We will call these homology classes {\em real\/} and {\em imaginary
tracing\/} classes of $L$, respectively, and denote them 
by $\Re\tr(L)$ and $\Im\tr(L)$.

 It is easy to see that the tracing homology classes 
depend only on the homotopy class of $L$ (a homotopy between loops in
$\Def(D)$ gives rise to a homology between the tracing cycles exactly in
the same way as $L$ turns into tracing cycles  $\Re\tr(L)$ and $\Im\tr(L)$). 
The maps
\begin{align*} 
&\Re\tr:\pi_1(\Def(D))\to H_1(\R X),\\
&\Im\tr:\pi_1(\Def(D))\to H_1(\C X\sminus\R X) 
\end{align*}  
defined by this construction will be called 
{\em real\/} and {\em imaginary tracing maps\/}, respectively.  
One can easily check that they are homomorphisms.

\subsection{Encoding of real tracing}\label{s3.2}
Let $X_1$, \dots, $X_r$ be connected components of $\R X$. Each of them is
homeomorphic to circle. An orientation of $\R X$ gives generators $[X_i]\in
H_1(X_i)$, which identify $H_1(X_i)$ with $\Z$.  The group $H_1(\R X)=
\oplus_{i=1}^rH_1(X_i)$ is identified with $\Z^r$. 

Thus, an orientation of $\R X$ encodes the real tracing class
$\Re\tr(L)$ of a loop $L:I\to \Def(D)$ by a sequence of integers 
$(c_1(L),\dots,c_r(L))$ such that  $\Re\tr(L)=c_1(L)[X_1]+\dots+c_r(L)[X_r]$.

Recall that if $X$ is a curve of type I, then a complex orientation 
provides a natural set of generators of $H_1(X_i)$, see Section \ref{s2.2}. 

\subsection{Monodromy permutations}\label{s3.3}
The paths $L_1,\dots,L_n:I\to\R X$ defined by a loop $L:I\to \Def(D)$ 
form an isotopy of the set $A=\supp\R D$ in $\R X$. The
isotopy defines a monodromy map  $\mu_L:A\to A$ that maps 
$P_k=L_k(0)$ to $L_k(1)$ for $k=1,\dots,n$. 

Of course, points $P_k$ and $\mu_L(P_k)$ belong to the same
connected component of $\R X$. Denote $A\cap X_i$ by $A_i$. 
Thus, $\mu_L$ maps $A_i$ to itself.

An orientation of $X_i$ induces a cyclic order on $A_i$.
An isotopy of a finite subset $A_i$ of $X_i$ preserves the cyclic order 
on $A_i$.  Therefore the monodromy map  $\mu_L:A\to A$ restricted to $A_i$ 
is a cyclic permutation. 

The group of permutations preserving a cyclic order on a finite set
has a canonical generator which sends each
element to the next one. Let us denote this canonical generator by $\nu$. 

The monodromy map $\mu_L:A\to A$ is completely determined by the
tracing class: 
$$\mu_L|_{A_i}=\nu^{c_i(L)},$$
where $c_i(L)$ are the integers which encode
$\Re\tr(L)=\sum_{k=1}^rc_k(L)[X_k]$, see Section \ref{s3.2}. 

\subsection{On a complex curve}\label{s3.4} Let $X$ be a non-singular
{\em complex\/} curve and $D$ be a simple divisor on $X$. As above, we
denote by $\Leq(D)$ the variety of divisors on $X$ which are linear
equivalent to $D$, and by $\GS\Leq(D)$ the discriminant hypersurface 
of $\Leq(D)$ which consists of non-simple divisors. 

The complement $\C\Leq(D)\sminus\C\GS\Leq(D)$ is a counter-part of 
$\Def(D)$: it consists of simple divisors which are linearly equivalent to
$D$ and can be connected to $D$ by a continuous family of simple divisors.
The only difference is that divisors are not required to be real, cf.
section \ref{s2.7}. 

Let $L:[0,1]\to \C\Leq(D)\sminus\C\GS\Leq(D)$ be a loop with $L(0)=L(1)=D$. 
Then there are uniquely defined  
paths $L_1,\dots,L_d:I\to\R C$ such that $L(t)=\sum_{k=1}^{d}L_k(t)$ 
for any $t\in I$. 

Each of these paths can be considered as a singular 1-simplex. 
Notice that singular chains $L_1+\dots+L_d$  is a 
cycle, because each of
the paths starts at a point where exactly one of the paths finishes. 

The cycle $L_1+\dots+L_n$ determines a homology class which is an 
element of $H_1(\C X)$. We will call this homology class the {\em 
tracing\/} class of $L$ and denote it by $\tr(L)$.

The tracing homology class $\tr(L)$ 
depends only on the homotopy class of $L$. A homotopy between loops in
$\C\Leq(D)\sminus\C\GS\Leq(D)$ gives rise to a homology between the tracing 
cycles exactly in the same way as $L$ turns into the tracing cycle
$\tr(L)$. 
The map 
$$\tr:\pi_1(\C\Leq(D)\sminus\C\GS\Leq(D))\to H_1(\C X)$$
defined by this construction will be called 
{\em tracing map\/}.  
One can easily check that it is  a homomorphism.

\subsection{Relation among tracing maps}\label{s3.5} If $X$ is a
real non-singular curve, $D$ is a simple real divisor on $X$, then the
following diagram is obviously commutative
\begin{equation}\label{tr-diagram} 
\begin{tikzcd} 
\pi_1(\Def(D))\arrow[r,"\Re\tr\oplus\Im\tr"]\arrow[d,"\inc_*"]
& H_1(\R X)\oplus H_1(\C X\sminus\R X)\arrow[d,"\inc_*+\inc_*"]\\
\pi_1(\C\Leq(D)\sminus\C\GS\Leq(D))\arrow[r,"\tr"]&H_1(\C X)
\end{tikzcd}
\end{equation}  
That is, $\tr\circ\inc_*=\inc_*\circ\Re\tr+\inc_*\circ\Im\tr$.

\section{Tracing theorems}\label{s4}
\subsection{Real tracing of a purely real divisor}\label{s4.1}

\begin{Th}\label{Th1}
Let $X$ be a non-singular projective real algebraic curve and let $D$ be a
simple purely real divisor on $X$.\\
\hspace*{21pt}{\bf(a) } If $X$ is of type I, then the image of real 
tracing map is contained in the cyclic subgroup generated by a complex
orientation class.\\
\hspace*{20pt}{\bf(b) } If $X$ is of type I and $\R X$ 
has a connected component disjoint from $\supp D$, then the real tracing map
$\Re\tr:\pi_1(\Def(D))\to H_1(\R X)$ is trivial.\\
\hspace*{22pt}{\bf(c) } If $X$ is of type II, then
the real tracing map
$\Re\tr:\pi_1(\Def(D))\to H_1(\R X)$ is trivial.
\end{Th}

\begin{rem}\label{rem4.1.1} 
The parts (b) and (c) of Theorem \ref{Th1} describe $\Re\tr$ completely,
while the part (a) gives only an upper bound for the image
of $\Re\tr$. In particular, the part (a) implies the following statement.
\end{rem}

\begin{cor}\label{cor} 
Let $X$ be a non-singular projective real algebraic curve of type I, let
$\R X$ be equipped with a complex orientation and consist of connected
components  $X_1,\dots,X_r$. 
Let $D$ be a simple purely real divisor on $X$, let $A=\supp D$ and 
$A_i=X_i\cap A$ be cyclically ordered via the complex
orientation of $X_i$. 
If $L:I\to \Def(D)$ is a loop, then:

{\bf(a)} $\Re\tr(L)=C_L[\R X]$ for some integer $C_L$ and

{\bf(b)} the induced monodromy $\mu_L:A\to A$ restricted to $A_i$ is $\nu^{C_L}$. 
\end{cor}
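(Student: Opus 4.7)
The plan is to derive the corollary as an essentially formal consequence of Theorem \ref{Th1}(a), combined with the encoding of $\Re\tr$ from Section \ref{s3.2} and the monodromy formula of Section \ref{s3.3}. First I would unpack what ``the cyclic subgroup generated by a complex orientation class'' means in the present setup. By the definition recalled in Section \ref{s2.2}, once $\R X$ is oriented by the complex orientation, the complex orientation class is precisely $[X_1]+\dots+[X_r]\in H_1(\R X)$, which is the fundamental class $[\R X]$ of $\R X$ with this orientation. Theorem \ref{Th1}(a) therefore says that $\Re\tr$ takes values in $\Z\cdot[\R X]$, so for every loop $L$ there is an integer $C_L$ with $\Re\tr(L)=C_L[\R X]$. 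This proves (a).

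For (b), I would expand
\[
\Re\tr(L)=C_L[\R X]=\sum_{i=1}^{r}C_L\,[X_i],
\]
and compare with the encoding $\Re\tr(L)=\sum_{i=1}^{r}c_i(L)[X_i]$ from Section \ref{s3.2}. Since the classes $[X_i]$ are a basis of $H_1(\R X)=\bigoplus_i H_1(X_i)$, the coefficients match term by term, giving $c_i(L)=C_L$ for every $i$. Now the monodromy formula $\mu_L|_{A_i}=\nu^{c_i(L)}$ from Section \ref{s3.3}, where $\nu$ is the canonical generator of the cyclic-order-preserving permutations of $A_i$, immediately yields $\mu_L|_{A_i}=\nu^{C_L}$ for each $i$.

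Since both parts reduce to previously established facts, there is no genuine obstacle; the only point that requires care is bookkeeping of orientations, namely ensuring that the complex orientation is used consistently both to identify $[X_1]+\dots+[X_r]$ with $[\R X]$ and to induce the cyclic order on each $A_i$ that defines $\nu$. The real content of the statement is concentrated in Theorem \ref{Th1}(a), whose proof is where the work lies.
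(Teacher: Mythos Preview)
Your proposal is correct and matches the paper's approach exactly: the paper presents this corollary as an immediate consequence of Theorem~\ref{Th1}(a) (see Remark~\ref{rem4.1.1}), and your argument spells out precisely how, via the encoding of Section~\ref{s3.2} and the monodromy formula $\mu_L|_{A_i}=\nu^{c_i(L)}$ of Section~\ref{s3.3}.
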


\subsection{Real tracing of real, but not necessarily purely real divisors}\label{s4.2}
The assumptions of the following theorem are weaker than the assumptions
of Theorem \ref{Th1}: the divisor is not assumed to be purely real.
 
\begin{Th}\label{Th2} Let $X$ be a non-singular projective real algebraic
curve of type I and $D$ be a real divisor on $X$. 
 
{\bf(a) } The image of real tracing map is 
contained in the cyclic subgroup generated by a complex
orientation class and classes divisible by two.

{\bf(b) }  If $\R X$ has a connected
component disjoint from $\supp D$,  then the image of real tracing map
$\Re\tr:\pi_1(\Def(D))\to H_1(\R X)$ consists of classes divisible by two.
\end{Th}

Naturally, the conclusions in Theorem \ref{Th2} are also weaker than their
counter-parts in Theorem \ref{Th1}. 
Namely, in the parts (a) and (b) 
the subgroup, where the image of $\Re\tr$ is allowed to be contained,
increases by the set of all even elements. In other words, the conclusions
stay the same, but in $\Z_{/2}$-homology instead of
$\Z$-homology. 

The part (c) of Theorem \ref{Th1} has no counter-part in
Theorem \ref{Th2}: Theorem \ref{Th2} does not contain any restriction 
on the real tracing map, if the curve is of type II and the divisor is 
not purely real.
 As we will see later, for a curve of type II the image
of $\Re\tr$ can be the whole $H_1(\R X)$.

\subsection{Tracing of a simple divisor on a complex curve}\label{s4.3} 
Theorems \ref{Th1} and \ref{Th2} are deduced below in section \ref{s6}
from the well-known facts about relation of homology classes realized 
in $H_1(\C X)$ by components of $\R X$ (see Section \ref{s2.2}), the 
relation among tracing homomorphisms described in section \ref{s3.5}, and
the following Theorem \ref{Th3}.  
 
\begin{Th}\label{Th3}
Let $X$ be a complex non-singular projective curve and $D$ be a simple
divisor on $X$. Then the tracing map  
$\tr:\pi_1(\C\Leq(D)\sminus\C\GS\Leq(D))\to H_1(\C X)$
is trivial.
\end{Th}

\section{Tracing for non-simple divisors}\label{s5}

\subsection{Why and how to extend tracing to non-simple divisors}\label{s5.0}
Tracing homomorphisms appeared in a number of situations above, see sections
\ref{s1.4}, \ref{s3.1} and \ref{s3.4}. They have been defined for 
spaces of {\em simple divisors}. However, they can 
be defined for {\em symmetric powers\/}, like the space $\C\Leq(D)$, 
which consists of divisors that are {\em not necessarily simple}.
 
The restriction to simple divisors makes the constructions easier. 
On the other hand, it suffices for stating the main results of this paper.

However, for the proof of Theorem \ref{Th3} we have to factor the tracing map
$\tr:\pi_1(\C\Leq(D)\sminus\C\GS\Leq(D))\to H_1(\C X)$ 
through $\pi_1(\C\Leq(D))$. This is done below by defining tracing
$\pi_1(\Sym^d(\C X))\to H_1(\C X)$, that is by  
allowing non-simple divisors. 

As a matter of fact, tracing homomorphism $\pi_1(\Sym^d(T))\to H_1(T)$
can be defined for any ``non-pathological'' topological space $T$ (say, a 
cw-complex). Here is a sketch for the construction:

As in the constructions above, it starts with taking a loop 
$L:I\to\Sym^d(T)$ which represents an element of $\pi_1(\Sym^d(T))$ and
splitting it into $d$ paths $I\to T$. They are not uniquely
defined by $L$, since the natural projection $p:\Sym^d(T)\to T$ is not a
covering. In order to overcome this difficulty, let us make $L$ nice with 
respect to the natural stratification of $\Sym^d(T)$, so that one can 
decompose $I$ into cells such that, on the interior of each of the 1-cells, 
$L$ would go in one of the strata. 
Then on the restrictions of $L$ to each of the 1-cells can be split 
to paths in $T$. The paths considered as singular 1-simplexes   
would form a 1-cycle, and we take the homology class of this 1-cycle. 

Then we would need to prove that the homology class does not depend on the
choices made in this construction and that homotopy of $L$ does not change 
the homology class. The former is obvious. The latter can be made 
by choosing the homotopy of $L$ nice  with respect to the natural
stratification of $\Sym^d(T)$ and subdividing it to nice pieces, and
proceeding as above, when we built the 1-cycle. 

This is how the author convinced himself that a required tracing exists,
but writing down all the details seemed be quite cumbersome and encouraged
to look for simplifications. 

We need this construction only for $T=\C X$. 
Since $\Sym^d(\C X)$ is a manifold and the discriminant 
$\GS$  has a real codimension 2, the loop $L$ which represents an
element of $\pi_1(\Sym^d(\C X))$ can be chosen in the complement of $\GS$. 
Then it splits into paths $L_1,\dots,L_d\to\C X$ uniquely. 

A homotopy between loops in $\Sym^d(T)$ may hit $\GS$. 
However, it can be chosen meeting $\GS$ in a generic point 
transversally. We will do this in section \ref{s5.6} below.
As a by-product,  a more 
conceptual view on relation between maps of arbitrary space to $\Sym^d(T)$ 
and $T$ will be developed in  sections \ref{s5.4} and \ref{s5.5}. 

There exists yet another construction for tracing
$\pi_1(\Sym^d(T))\to H_1(T)$ based on the Dold-Thom 
isomorphism. It is presented in section \ref{s5.7} for the sake of
completeness. The construction is
elegant and conceptual, but difficult to relate to the constructions that 
we use in configuration spaces. 
  
\subsection{Configuration spaces}\label{s5.1}
Recall that spaces of effective divisors on a curve $X$ were introduced
above as symmetric powers $\Sym^d(\C X)$ of $\C X$ equipped with additional 
structures, and spaces $\Leq(D)$ and $\Def(D)$ were introduced as subspaces 
of $\Sym^d(\C X)$, see sections \ref{s2.4}, \ref{s2.6} and \ref{s2.7}. 

Let $T$ be a topological space.
Points of $T^d$ are ordered $d$-tuples $(x_1,\dots,x_d)$ of points of $T$.
Points of $\Sym^dT$ are unordered $d$-tuples $\{x_1,\dots,x_d\}$ of points 
of $T$. A point 
can appear several times, so the number of pairwise different points in 
a $d$-tuple may be less than $d$. A $d$-tuple of pairwise different points 
of $T$ is called a {\em configuration\/} of $d$ points. 

The subset of $T^d$ which consists of points $(x_1,\dots,x_d)$, 
where $x_i=x_j$ for some $i\ne j$, is called the {\em big diagonal\/} of
$T^d$ and is denoted by $\GD$. Denote by $\GS$ the image of $\GD$ under the natural
projection $p:T^d\to\Sym^dT$, that is $\GS=p(\GD)$.

The space $T^d\sminus\GD$ is the {\em $d$-th ordered configuration space\/}
 of $T$ and denoted $\Conf_d(T)$. The space $\Sym^dT\sminus\GS$ is called
the {\em $d$-th unordered configuration space\/} and denoted by
$\UConf_d(T)$.
The map $\Conf_d(T)\to\UConf_d(T)$ defined as a submap of $p:T^d\to\Sym^dT$ 
is a regular $\mathbb S_d$-covering.

\subsection{Tracing in configuration spaces}\label{s5.2}
In this section we construct a homomorphism 
$\Tr:\pi_1(\UConf_d(T))\to H_1(T)$ for any positive integer $d$
and a topological space $T$. We follow almost literally the constructions 
of section \ref{s3.4}.

Shortly speaking, $\Tr$ assigns to a homotopy class 
of a loop in the configuration space $\UConf_d(T)$ the homology class 
of the sum of paths 
which are traced by all the points of the configuration while the
configuration traces the loop. Here are more 
details.

Let  $L:I\to\UConf_d(T)$  be a loop. By the Lifting Path Theorem, 
there exists a path   $\widetilde L:I \to\Conf_d(T)$ covering $L$. 
It can be presented as $\widetilde L:t\mapsto (L_1(t),\dots,L_d(t))$, 
where $L_k:I\to T$
with $k=1,\dots,d$ are paths in $T$. Since $L$ is a loop, i.e.,
$L(0)=L(1)$,  we have
$\{L_1(0),\dots,L_d(0)\}=\{L_1(1),\dots,L_d(1)\}$.
It follows that the singular chain $L_1+\dots+L_d\in C_1(T)$ is a cycle.
Denote by $\Tr(L)$ the homology class of this cycle in $H_1(T)$.

In the construction of $\Tr(L)$, we have chosen a covering path 
$\widetilde L$ of $L$. A covering path is determined by a choice of its
initial point. The initial point of $L$ is a $d$-element set 
$\{x_1,\dots,x_d\}\subset T$. An initial point of the covering path is a
point of $T^d$ obtained by any ordering of this set. 
The ordering of the initial points is nothing but an ordering of paths
$L_k$. It does not affect the homology class of $L_1+\dots+L_d$. Thus, 
$\Tr(L)$ does not depend on the choice.

For $\Gl\in\pi_1(\UConf_d(T))$, define $\Tr(\Gl)$ to be $\Tr(L)$
for a loop $L$ which belongs to $\Gl$. It does not depend on the choice of
$L$. Indeed, if $L$ and $L'$ are homotopic loops in $\UConf_d(T)$, 
then homotopy between them  can be lifted to $ \Conf_d(T)$ by
the Lifting Homotopy Theorem and gives rise to a homology between the
cycles $\Tr(L)$ and $\Tr(L')$ in $T$.
\hfill$\square$

\subsection{Relation between the tracing maps}\label{s5.3}
If $X$ is a non-singular complex projective curve and $T=\C X$ and $D$ is a
simple real divisor of degree $d$ on $X$, then $\C\Leq(D)\sminus\C\GS\Leq(D)$ is a subspace of
$\UConf_d(T)$. The tracing map 
$\tr:\pi_1(\C\Leq(D)\sminus\C\GS\Leq(D))\to H_1(\C X)$  
defined in section \ref{s3.4} is factored through $\Tr$ via the
following commutative diagram.

\begin{equation}\label{eq1}
\begin{tikzcd}
\pi_1(\C\Leq(D)\sminus\C\GS\Leq(D))\arrow[rd,"\tr"]\arrow[r,"\inc_*"]&\pi_1(\UConf_d(\C X))\arrow[d,"\Tr"]\\ 
&H_1(\C X)
\end{tikzcd}
\end{equation}

\subsection{Spaces of pointed unordered $d$-tuples}\label{s5.4} An
unordered $d$-tuple of points, in which one of the points is 
distinguished (we will say also {\em pointed\/}), is called {\em pointed.\/} 
The space of pointed unordered
$d$-tuples of points of a space $T$ can be identified with 
$(\Sym^{d-1}T)\times T$. Indeed, 
$(\{x_1,\dots,x_{d-1}\},x)\in(\Sym^{d-1}T)\times T$ can be identified
with the unordered $d$-tuples $\{x_1,\dots,x_{d-1},x\}\in \Sym^dT$ in 
which $x$ is distinguished. 

 Like $\Sym^dT$, the space $(\Sym^{d-1}T)\times T$
can be obtained as a quotient space of $T^d$. Indeed,
$(\Sym^{d-1}T)\times T$ is the orbit space of the action of $\mathbb S_{d-1}$ 
in $T^d$ by permuting the first $d-1$ components and keeping the last
component fixed. 
Each orbit of this  action is contained in an orbit of the natural action 
of $\mathbb S_d$ in $T^d$. This gives a map $q:(\Sym^{d-1}T)\times T\to\Sym^dT$.
In terms of pointed $d$-tuples, $q$ forgets which element of $d$-tuple was
pointed. For an unordered $d$-tuple $\{x_1,\dots,x_d\}\in\Sym^dT$, 
the preimage under $q$ consists of the same $d$-tuple in which pointing is 
made in all the ways.

Thus the space of pointed unordered $d$-tuples of points of $T$ occupies 
an intermediate position between $T^d$ and $\Sym^dT$: there are natural 
surjections $T^d\to(\Sym^{d-1}T)\times T\to\Sym^dT$.

Over $\UConf_d(T)=\Sym^dT\sminus\GS$, the map $q$ defines a covering  
$$\left((\Sym^{d-1}T)\times T\right)\sminus q^{-1}\GS\to\Sym^dT\sminus\GS$$
The space $\left((\Sym^{d-1}T)\times T\right)\sminus q^{-1}\GS$ can be
interpreted as the space of {\em pointed\/} unordered $d$-element subsets 
of $T$.  Denote this space by $\PConf_d(T)$ and the covering 
$\PConf_d(T)\to\UConf_d(T)$ forgetting the pointing, by $r$.

\subsection{Generalizations of tracing map}\label{s5.5}
In this section 
for any topological space $Y$ and its continuous map $f:Y\to\UConf_d(T)$
we will construct a $d$-fold covering space $\widetilde Y$ of $Y$ and a 
continuous map $\widetilde f:\widetilde Y\to T$. 

The covering $\widetilde Y\to Y$ is the pullback $f^*r$ of the covering 
$r:\PConf_d(T)\to\UConf_d(T)$ 
by $f$. By the definition of pullback, the total space $\widetilde Y$ 
consists of pairs $(y,z)\in Y\times\PConf_d(T)$ such that $f(y)=r(z)$. 
 The map $\widetilde f$ maps $(y,z)\in\widetilde Y$ to the distinguished 
point of $z$. (Recall that $z\in\PConf_d(T)$ is a pointed unordered
configuration of points of $T$.) 

All these maps are gathered in the following
commutative diagram:
\begin{equation}\label{cd1} 
\begin{tikzcd}
\widetilde Y\arrow[d,"f^*r"]\arrow[r,"r^*f"]\ar[rrr,bend
left=30,"\widetilde f"]
&\PConf_d(T)\arrow[d,"r"]\arrow[r,hook]
&(\Sym^{d-1}T)\times T\arrow[d,"q"]\arrow[r,"pr_{\scriptscriptstyle
 T}"]&T\\
Y\arrow[r,"f"]&\UConf_d(T)\arrow[r,hook]&\Sym^dT& 
\end{tikzcd}
\end{equation}

The construction has several obvious nice properties:
\begin{enumerate}\setlength\itemsep{0pt} \setlength\topsep{0pt} 
\item If $Y$ is a manifold, then $\widetilde Y$ is a manifold.
\item An orientation of a manifold $Y$ defines an orientation of 
$\widetilde Y$.
\item If $f$ is homotopic to $g$, then $\widetilde f$ is homotopic to $\widetilde g$.
\item If $Y$ is an oriented manifold and $f:Y\to\UConf_d(T)$ is bordant to
zero, then $\widetilde f$ is bordant to zero.  
\end{enumerate}
These properties obviously imply the following theorem.

\begin{Th}\label{Th-tr}
For any topological space $T$ and any natural numbers $d$ and $k$, there is 
a natural homomorphism $\Omega_k(\UConf_d(T))\to \Omega_k(T)$ which maps 
a bordism class of a map $f:M\to\UConf_d(T) $ to a bordism class of 
$\widetilde f:\widetilde M\to T$.
\end{Th}

Since the 1-dimensional bordism group is naturally isomorphic the
1-dimensional homology group, Theorem \ref{Th-tr} provides a homomorphism 
$$H_1(\UConf_d(T))\to H_1(T)$$ It is easy to see that its composition 
$\pi_1(\UConf_d(T))\to H_1(\UConf_d(T))\to H_1(T)$ with
the Hurewicz homomorphism coincides with homomorphism $\Tr$ introduced in 
Section \ref{s5.2}.

\subsection{Extension of tracing}\label{s5.6}
\begin{Th}\label{Th5} 
If $X$ is a non-singular complex projective curve  and $d\ge2$ is an
integer, then there exists a homomorphism
$\TR:\pi_1(\Sym^d(\C X))\to H_1(\C X)$ such that the following diagram 
is commutative
\begin{equation}\label{eq2}
\begin{tikzcd} \pi_1(\UConf_d(\C
X))\arrow[r,"\inc_*"]\arrow[d,"\Tr"]& 
\pi_1(\Sym^d(\C X))\arrow[ld,"\TR"]\\H_1(\C X)&
\end{tikzcd}
\end{equation}
\end{Th}

\begin{proof}  Since $\GS$ is a complex hypersurface in
$\Sym^d(\C X)$, its real codimension is 2 and the inclusion homomorphism 
$$\inc_*: \pi_1(\UConf_d(\C X))(=\pi_1(\Sym^d(\C
X)\sminus\GS))\to\pi_1(\Sym^d(\C X))$$  
is surjective. Consider the composition 
$$
\pi_1(\Sym^d(\C X))\xrightarrow{\inc_*^{-1}}\pi_1(\UConf_d(\C X))
\xrightarrow{\Tr}H_1(\C X)
$$
A priori, this is a multivalued map, since $\inc_*$ is not injective.
In order to prove that this multivalued map is univalued, we have 
to verify that $\Tr(\Ker\inc_*)=0$. Down to loops, the latter means
that for any continuous map $f:S^1\to\Sym^d(\C X)\sminus\GS$, which admits a
continuous extension $F:D^2\to\Sym^d(\C X)$, the map 
$\widetilde f:\widetilde S^1\to\C X$ is bordant to zero in $\C X$.

By a small deformation make both $f$ and $F$ differentiable, 
make the image of $F$ intersecting only the highest dimensional  
stratum of the discriminant $\GS$ and make $F$ transversal to it. 

Now we apply a construction similar to the construction of section
\ref{s5.5}.  
Pull back the ramified
covering $\Sym^{d-1}(\C X)\times\C X\to\Sym^d(\C X)$ via $F$. This gives rise 
to the diagram
$$\begin{tikzcd}
\widetilde{D^2}\arrow[d,swap,"F^*q"]\arrow[r,"\widetilde F"]& \Sym^{d-1}(\C
X)\times\C X\arrow[d,"q"]\\
D^2\arrow[r,"F"]&\Sym^d(\C X)    
\end{tikzcd}
$$
The map $F^*q$, as a pull back of a ramified covering via a map transversal
to the ramification locus, is a 
covering ramified over the preimage of the ramification locus of
$q$, that is over a finite set $F^{-1}\GS$. Therefore, $\widetilde{D^2}$ is a
compact oriented surface with boundary.    
\end{proof}

\begin{rem}
Compare the proof of Theorem \ref{Th5} to the proof of Lemma 2.6 in 
Ozsv\'ath and Szab\'o \cite{OS}.  
Substantially, this is the same construction, but presented here more 
formally and explicitly. We could not limit ourselves to referring to
\cite{OS}, because we deal with a slightly more general situation: 
in \cite{OS} the number $d$ coincides with the genus of $\C X$. 
\end{rem}

\subsection{Dold-Thom theorem and tracing}\label{s5.7}
Let us point out a connection of the tracing map to the Dold-Thom
isomorphism.
 
Symmetric powers $\Sym^d(T)$ of a space $T$ with a base point $t_0\in T$ are 
embedded to each other 
$$\Sym^d(T)\to\Sym^{d+1}(T):\{x_1,\dots,x_d\}\mapsto\{x_1,\dots,x_d,t_0\}.$$ 
The union $\cup_d\Sym^d(T)$ is called {\em the infinite symmetric
power\/} of $T$ and denoted by $\Sym(T)$.

According to the Dold-Thom theorem \cite{DT}, for a reasonable space $T$ 
(say, a cw-complex), {\em there is a natural isomorphism 
$H_n(T)\to\pi_n(\Sym(T))$}. 

The tracing homomorphism $\TR$, which was introduced and used above only
for $T=\C X$, can be 
obtained in a more general situation as the composition 
$$\pi_1(\Sym^d(T))\xrightarrow{\inc_*}\pi_1(\Sym(T))\xrightarrow{DT^{-1}}H_1(T),$$ where
$\inc_*$ is the inclusion homomorphism and $DT^{-1}$ is the isomorphism
inverse to the isomorphism from the Dold-Thom theorem.

Unfortunately, this elegant construction is less convenient for our
purposes than the more explicit construction used above.

\section{Proofs of tracing theorems}\label{s6}
\subsection{Proof of Theorem \ref{Th3}}\label{s6.1}
Recall that Theorem \ref{Th3} claims that the tracing map  
$$\tr:\pi_1(\C\Leq(D)\sminus\C\GS\Leq(D))\to H_1(\C X)$$
is trivial for any simple divisor $D$ on a complex non-singular projective
curve $X$.

The diagrams \eqref{eq1} and \eqref{eq2} can be united 
as follows:
\begin{equation}\label{eq3}
\begin{tikzcd}
 \pi_1(\C\Leq(D)\sminus\C\GS\Leq(D))\arrow[rd,"\tr"]\arrow[r,"\inc_*"]&\pi_1(\UConf_d(\C
X))\arrow[d,"\Tr"]\arrow[r,"\inc_*"]&\pi_1(\Sym^d(\C X))\arrow[dl,"TR"]\\ 
&H_1(\C X)&
\end{tikzcd}
\end{equation}
On the top row of this diagram, we have two inclusion homomorphism. Replace
these two homomorphisms by their composition, which is also 
 an inclusion homomorphism:
\begin{equation}\label{eq4} 
\begin{tikzcd}
 \pi_1(\C\Leq(D)\sminus\C\GS\Leq(D))\arrow[rd,"\tr"]\arrow[rr,"\inc_*"]&
&\pi_1(\Sym^d(\C X))\arrow[dl,"TR"]\\ &H_1(\C X)&
\end{tikzcd}
\end{equation} 
Observe that the space $\C\Leq(D)\sminus\C\GS\Leq(D)$ of simple divisors 
linear equivalent
to $D$ is contained in   $\C\Leq(D)$ of all effective divisors
linear equivalent to $D$. Furthermore, $\C\Leq(D)\subset\Sym^d(\C X)$.  
Therefore we can insert $\C\Leq(D)$ into the top row of diagram \eqref{eq4}:
\begin{equation}\label{eq5} 
\begin{tikzcd}
 \pi_1(\C\Leq(D)\sminus\C\GS\Leq(D))\arrow[rd,"\tr"]\arrow[r,"\inc_*"]&
\pi_1(\C\Leq(D)))\arrow[r,"\inc_*"]&\pi_1(\Sym^d(\C X))\arrow[dl,"TR"]\\ 
&H_1(\C X)&
\end{tikzcd}
\end{equation} 
The space $\C\Leq(D)$ is known to be homeomorphic to a complex projective
space. Hence $\pi_1(\C\Leq(D))=0$. Therefore the
composition of the horizontal homomorphisms of diagram \eqref{eq5} is
trivial, as well as their composition
$\pi_1(\C\Leq(D)\sminus\C\GS\Leq(D))\xrightarrow{\inc_*}\pi_1(\Sym^d(\C X))$. Hence, 
the homomorphism 
$$\tr:\pi_1(\C\Leq(D)\sminus\C\GS\Leq(D))\to H_1(\C X)$$ is trivial. \qed  

\subsection{Proof of Theorem \ref{Th1}}\label{s6.2}
Let $D$ be a simple purely real divisor on a non-singular projective 
real algebraic curve $X$. Since $D$ is purely real, the imaginary tracing
$\Im\tr$ vanishes. 
Therefore the diagram \eqref{tr-diagram} reduces to 
\begin{equation}\label{purely-tr-diagram} 
\begin{tikzcd} 
\pi_1(\Def(D))\arrow[r,"\Re\tr"]\arrow[d,"\inc_*"]
& H_1(\R X)\arrow[d,"\inc_*"]\\
\pi_1(\C\Leq(D)\sminus\C\GS\Leq(D))\arrow[r,"\tr"]&H_1(\C X)
\end{tikzcd}
\end{equation} 
By Theorem \ref{Th3}, the homomorphism 
$\pi_1(\C\Leq(D)\sminus\C\GS\Leq(D))\xrightarrow{\tr}H_1(\C X)$
is trivial. Hence, the composition 
$\pi_1(\Def(D))\xrightarrow{\Re\tr} H_1(\R X)\xrightarrow{\inc_*}H_1(\C X)$
is trivial.

If $X$ is of type II (as in the part (c)), then the inclusion homomorpism 
$\inc_*: H_1(\R X)\to H_1(\C X)$ is injective (see section \ref{s2.2}).
Since $\inc_*\circ\Re\tr:\pi_1(\Def(D))\to H_1(\C X)$ is trivial and 
$\inc_*: H_1(\R X)\to H_1(\C X)$ is injective, commutativity of 
\eqref{purely-tr-diagram} implies that 
$\Re\tr:\pi_1(\Def(D))\to H_1(\R X)$ is trivial. 

If $X$ is of type I (as in the parts (a) and (b)), then the kernel of
$\inc_*: H_1(\R X)\to H_1(\C X)$ is an infinite cyclic group generated 
by the class $[\R X]$ defined by a complex orientation (see section 
\ref{s2.2}). Since the homomorphism
$$\inc_*\circ\Re\tr:\pi_1(\Def(D))\to H_1(\C X)$$ is 
trivial, the image of $\Re\tr$ must be contained in the kernel of 
 $\inc_*: H_1(\R X)\to H_1(\C X)$. This kernel is generated by 
$[\R X]=\sum_i[X_i]$,
where $X_i$ runs over all connected components of $\R X$.
This is what is claimed in the part (b). 

If $\R X$ has a connected component $X_j$ disjoint from $\supp D$
(as in part (a)), then the image of $\Re\tr$ does not fit to the kernel of
the inclusion homomorphism $\inc_*: H_1(\R X)\to H_1(\C X)$ unless
$\Re\tr=0$. \qed

\subsection{Proof of Theorem \ref{Th2}}\label{s6.3}
Since the curve $X$ is assumed to be of type I, $\C X\sminus\R X$ 
consists of two connected components, which are homeomorphic to each 
other by $\conj$. Denote the closures in $\C X$ of these components
by $X_+$ and $X_-$. 

Represent the real simple divisor $D$ as $\Re D+\Im D$, where
 $\Re D=\sum_{k=1}^nP_k$ and $\Im D=\sum_{k=1}^{m}(P_{n+k}+\conj P_{n+k})$
with $P_{n+k}\in X_+$.

Let $L:[0,1]\to \Def(D)$ be a loop with $L(0)=L(1)=D$. 
Then there are uniquely defined  
paths $L_1,\dots,L_n:I\to\R X$ and $L_{n+1},\dots,L_{n+m}:I\to X_+$ such that $L(t)=\sum_{k=1}^{n}L_k(t)+\sum_{k=1}^{m}(L_{n+k}(t)+\conj
L_{n+k}(t))$ for any $t\in I$. 

According to definition, the cycle $\sum_{k=1}^nL_k$ defines
$\Re\tr(L)\in H_1(\R X)$. Denote the homology class of the cycle
$\sum_{k=1}^mL_{n+k}$ in $X_+$ by $\Im_+\tr(L)$.

\begin{lem}The homology class  $\Im_+\tr(L)\in H_1(X_+)$ belongs to the
image of $H_1(\R X)$ under the inclusion homomorphism $H_1(\R X)\to
H_1(X_+)$.
\end{lem}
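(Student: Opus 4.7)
The plan is to deduce the lemma directly from Theorem \ref{Th3} by passing to the relative homology $H_1(\C X,\R X)$ and using the splitting of $\C X$ induced by the type I hypothesis.

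First I would check that $C_+:=\sum_{k=1}^{m}L_{n+k}$ is genuinely a cycle in $X_+$ (and symmetrically $C_-:=\sum_{k=1}^{m}\conj\circ L_{n+k}$ a cycle in $X_-$). Each path $L_{n+k}$ starts in the open set $X_+\sminus\R X$, lies entirely in $\C X\sminus\R X$, and therefore by connectedness stays in $X_+\sminus\R X$ throughout. The loop condition $L(0)=L(1)$ then forces the endpoints of the $L_{n+k}$ to permute $\{P_{n+1},\dots,P_{n+m}\}\subset X_+$, so $C_+$ is indeed closed. Writing $C_{\R}:=\sum_{k=1}^{n}L_k$, the full tracing cycle of $L$ in $\C X$ is $C=C_{\R}+C_++C_-$, and by Theorem \ref{Th3} applied to the image of $L$ in $\C\Leq(D)\sminus\C\GS\Leq(D)$ we have $[C]=0$ in $H_1(\C X)$.

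The second step would be to pass to $H_1(\C X,\R X)$. Since $X$ is of type I, $\R X$ separates $\C X$ into $\mathrm{int}\,X_+$ and $\mathrm{int}\,X_-$, and excision along a bicollar of $\R X$ yields the direct sum splitting
\[
H_1(\C X,\R X)\;\cong\;H_1(X_+,\R X)\oplus H_1(X_-,\R X).
\]
Under the natural map $H_1(\C X)\to H_1(\C X,\R X)$ the contribution of $C_{\R}$ vanishes (it is supported in $\R X$), while $C_+$ maps to the first summand and $C_-$ to the second. Hence $[C]=0$ forces the relative class of $C_+$ in $H_1(X_+,\R X)$ to be zero, and exactness of
\[
H_1(\R X)\longrightarrow H_1(X_+)\longrightarrow H_1(X_+,\R X)
\]
places $[C_+]=\Im_+\tr(L)$ in the image of $H_1(\R X)\to H_1(X_+)$, as required.

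I do not expect a genuine obstacle here. The only point that deserves care is the excision splitting of $H_1(\C X,\R X)$, but this is standard once one notes that type I provides both a bicollar of $\R X$ in $\C X$ and a separation into two pieces. Once that splitting is in hand, the lemma reduces to a short diagram chase through the long exact sequence of the pair $(X_+,\R X)$ and the vanishing supplied by Theorem \ref{Th3}.
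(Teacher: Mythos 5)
Your proof is correct and follows essentially the same route as the paper: both deduce the vanishing of the relative class of $\sum_k L_{n+k}$ in $H_1(X_+,\R X)$ from Theorem \ref{Th3} via excision, and then conclude by exactness of the sequence of the pair $(X_+,\R X)$. The only cosmetic difference is that you relativize modulo $\R X$ and split $H_1(\C X,\R X)$ into two summands, whereas the paper relativizes modulo $X_-$ in one step; the resulting map $H_1(\C X)\to H_1(X_+,\R X)$ is the same.
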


\begin{proof} 
The cycle 
$$\sum_{k=1}^nL_k+\sum_{k=1}^mL_{n+k}+\sum_{k=1}^m\conj L_{n+k}$$
realizes $\Tr(\Gl)\in H_1(\C X)$. 
 By Theorem \ref{Th3}, $\Tr(\Gl)=0$. 
The relativization homomorphism $H_1(\C X)\to H_1(\C X, X_-)$ 
maps $\Tr(\Gl)$ to the homology class realized by $\sum_{k=1}^mL_{n+k}$. 
It is zero, as the image of $0=\Tr(\Gl)$.
By excision, $H_1(\C X,X_-)=H_1(X_+,\R X)$. Hence, the cycle 
$\sum_{k=1}^mL_{n+k}$ realizes $0\in H_1(X_+,\R X)$. By exactness of the
homology sequence of pair $(X_+,\R X)$, it means that the homology class
realized by $\sum_{k=1}^mL_{n+k}$ in $X_+$ (that is $\Im_+\tr(\Gl)$) 
belongs to the image of inclusion homomorphism $H_1(\R X)\to
H_1(X_+)$.
\end{proof}

Let $\tau\in H_1(\R X)$ be a homology class such that its image under the
inclusion homomorphism $H_1(\R X)\to H_1(X_+)$ equals $\Im_+\tr(\Gl)$. 
Then the image of $\Re\tr(\Gl)+2\tau\in H_1(\R X)$ under the inclusion
homomorphism $H_1(\R X)\to H_1(\C X)$ is realized by the same cycle
$\sum_{k=1}^nL_k+\sum_{k=1}^mL_{n+k}+\sum_{k=1}^m\conj L_{n+k}$ as 
$\Tr(\Gl)$ which equals zero. 
Hence, $\Re\tr(\Gl)+2\tau\in H_1(\R X)$ belongs to the 
kernel of  the inclusion homomorphism $H_1(\R X)\to H_1(\C X)$. The kernel
is generated by a class of complex orientation $[\R X]$. 
Therefore, $\Re\tr(\Gl)+2\tau=c[\R X]$ for some $c\in\Z$. It follows
that $\Re\tr(\Gl)$ belongs to the subgroup generated by $[\R X]$ and even
elements. This is the statement (a) of Theorem \ref{Th2}.

If $\R X$ has a connected component disjoint from $\supp D$, then the
fundamental class of this connected component is not involved in
$\Re\tr(\Gl)$. Therefore in the formula $\Re\tr(\Gl)+2\tau=c[\R X]$
obtained above, $c$ must be even. Hence,  $\Re\tr(\Gl)=-2\tau+c[\R X]$ is
divisible by two. This is the statement (b) of Theorem 2. 
\qed

\section{Examples}\label{s7}


\subsection{Divisors of degree 2 on a plane M-cubic}\label{s7.1} 
 Let $X$ be a plane projective 
non-singular cubic curve with $\R X$ consisting of two components: an oval
$X_0$ and a one-sided component $X_1$. Let $A\in X_0$ and $B\in X_1$ be
arbitrary points. Consider divisor $D=A+B$. This is a simple purely real
divisor of degree 2. 

Assume that the points are chosen generically, so that 
the line $l$ passing through $A$ and $B$ is transversal to $X$. Then it
intersects $X$ in three points, two of them are $A$ and $B$. Denote the 
third intersection point by $C$. It belongs to $X_0$. For any other real
line $l'$ passing through $C$, the divisor $l'\cap\R X$ is of degree 3
and has a form $C+A'+B'$, where $A'\in X_0$ and $B'\in X_1$. For one of the
lines, namely for the tangent line to $X$ at $C$, $A'=C$. 
\begin{figure}[htb]
\begin{center}
\begin{tikzpicture}[scale=1.0,>=latex]
\filldraw(-.04,.18)circle(1.5pt);
\filldraw(1.17,.67)circle(1.5pt);
\node at(.15,.5){{\color{blue!70!black}$A$}};
\node at(.95,.85){{\color{blue!70!black}$B$}};
\draw[-](-1.5,-.4)--(2.0,1);
\draw[domain=-1:0,smooth=200,very thick,blue!70!black] plot({\x},{-((\x)^3-\x)^.5});
\draw[->,very thick,blue!70!black
](-0.02,-.05)--(-0.002,.07);
\draw[<-,very thick,blue!70!black](-.99,-.05)--(-.99,.07);
\draw[->,very thick,blue!70!black](1.01,-.05)--(1.01,.07);
\node at(-1.4,.2){{\color{blue!70!black}$X_0$}};
\node at(1.13,1.4){{\color{blue!70!black}$X_1$}};
\draw[domain=-1:0,smooth=200,very thick,blue!70!black] plot({\x},{((\x)^3-\x)^.5});
\draw[domain=1:1.4,smooth=200,very thick,blue!70!black] plot({\x},{-((\x)^3-\x)^.5});
\draw[domain=1:1.6,smooth=200,very thick,blue!70!black,->] plot({\x},{((\x)^3-\x)^.5});
\end{tikzpicture}
\hspace*{.7cm}
\begin{tikzpicture}[scale=1.0,>=latex]
\filldraw(-.98,-.19)circle(1.5pt);
\filldraw(-.04,.18)circle(1.5pt);
\filldraw(1.17,.67)circle(1.5pt);
\node at(-1.2,-.6){{\color{blue!70!black}$C$}};
\node at(.15,.5){{\color{blue!70!black}$A$}};
\node at(.95,.85){{\color{blue!70!black}$B$}};
\draw[-](-1.5,-.4)--(2.0,1);
\draw[domain=-1:0,smooth=200,very thick,blue!70!black] plot({\x},{-((\x)^3-\x)^.5});
\draw[->,very thick,blue!70!black](-0.02,-.05)--(-0.002,.07);
\draw[<-,very thick,blue!70!black](-.99,-.05)--(-.99,.07);
\draw[->,very thick,blue!70!black](1.01,-.05)--(1.01,.07);
\node at(-1.4,.2){{\color{blue!70!black}$X_0$}};
\node at(1.13,1.4){{\color{blue!70!black}$X_1$}};
\draw[domain=-1:0,smooth=200,very thick,blue!70!black] plot({\x},{((\x)^3-\x)^.5});
\draw[domain=1:1.4,smooth=200,very thick,blue!70!black] plot({\x},{-((\x)^3-\x)^.5});
\draw[domain=1:1.6,smooth=200,very thick,blue!70!black,->] plot({\x},{((\x)^3-\x)^.5});
\end{tikzpicture}
\hspace*{.7cm}
\begin{tikzpicture}[scale=1.0,>=latex]
\filldraw(-.98,-.19)circle(1.5pt);
\filldraw(-.04,.18)circle(1.5pt);
\filldraw(1.17,.67)circle(1.5pt);
\node at(-1.2,-.6){{\color{blue!70!black}$C$}};
\node at(.15,.5){{\color{blue!70!black}$A$}};
\node at(.95,.85){{\color{blue!70!black}$B$}};
\draw[-](-1.5,-.4)--(2.0,1);
\draw[-](-1.5,-.17)--(2.0,-.37);
\filldraw(-.05,-.24)circle(1.5pt);
\filldraw(1.05,-.31)circle(1.5pt);
\node at(.10,-.5){{\color{blue!70!black}$A'$}};
\node at(0.8,-0.53){{\color{blue!70!black}$B'$}};
\draw[domain=-1:0,smooth=200,very thick,blue!70!black] plot({\x},{-((\x)^3-\x)^.5});
\draw[->,very thick,blue!70!black](-0.005,+.01)--(-0.001,.06);
\draw[<-,very thick,blue!70!black](-.99,-.05)--(-.99,.07);
\draw[->,very thick,blue!70!black](1.01,-.05)--(1.01,.07);
\node at(-1.4,.2){{\color{blue!70!black}$X_0$}};
\node at(1.13,1.4){{\color{blue!70!black}$X_1$}};
\draw[domain=-1:0,smooth=200,very thick,blue!70!black] plot({\x},{((\x)^3-\x)^.5});
\draw[domain=1:1.4,smooth=200,very thick,blue!70!black] plot({\x},{-((\x)^3-\x)^.5});
\draw[domain=1:1.6,smooth=200,very thick,blue!70!black,->] plot({\x},{((\x)^3-\x)^.5});
\end{tikzpicture}
\end{center}
\end{figure}

The divisor $A'+B'$ is linear equivalent to $D$. Rotating $l$ about $C$
by $\pi$ gives rise to a loop in $\Def(D)$. The real tracing of this loop
equals $[\R X]$. The monodromy is trivial.

If on the same curve points $A$ and $B$ are selected either both on the
same component (both on $X_0$ or $X_1$ - does not matter), then the divisor 
$D=A+B$ has trivial tracing by Theorem \ref{Th1} (b).

\subsection{Divisors of degree 3 on a plane M-cubic}\label{s7.2} 
Let $X$, $A,B,C$ and $l$ be as in the preceding example. Let $D=A+B+C$. 
\begin{minipage}{\textwidth}
\begin{minipage}{4.5cm}
\begin{tikzpicture}[scale=1.0,>=latex]
\filldraw(-.5,0)circle(1pt);
\filldraw(-.98,-.19)circle(1.5pt);
\filldraw(-.04,.18)circle(1.5pt);
\filldraw(1.17,.67)circle(1.5pt);
\node at(-.3,-.1){$c$};
\node at(-1.2,-.6){{\color{blue!70!black}$C$}};
\node at(.15,.5){{\color{blue!70!black}$A$}};
\node at(.95,.85){{\color{blue!70!black}$B$}};
\draw[-](-1.5,-.4)--(2.0,1);
\draw[domain=-1:0,smooth=200,very thick,blue!70!black] plot({\x},{-((\x)^3-\x)^.5});
\draw[->,very thick,blue!70!black](-0.01,-.05)--(-0.01,.07);
\draw[<-,very thick,blue!70!black](-.99,-.05)--(-.99,.07);
\draw[->,very thick,blue!70!black](1.01,-.05)--(1.01,.07);
\node at(-1.4,.2){{\color{blue!70!black}$X_0$}};
\node at(1.13,1.4){{\color{blue!70!black}$X_1$}};
\draw[domain=-1:0,smooth=200,very thick,blue!70!black] plot({\x},{((\x)^3-\x)^.5});
\draw[domain=1:1.4,smooth=200,very thick,blue!70!black] plot({\x},{-((\x)^3-\x)^.5});
\draw[domain=1:1.6,smooth=200,very thick,blue!70!black,->] plot({\x},{((\x)^3-\x)^.5});
\end{tikzpicture}
\end{minipage}
\begin{minipage}{9.2cm}\vspace*{3pt} 
Choose a point $c\in l$ inside  the open disk
bounded by $X_0$. Then the divisors which are cut on $X$ by
real lines passing through $c$ form a subspace $S$ of $\Def(D)$ 
homeomorphic to circles. A generator of its fundamental group has
tracing $[\R X]$. The monodromy is the transposition of $A$ and
$B$.%
\end{minipage}
\end{minipage}

\subsection{Divisors of degree 9 on a plane M-cubic}\label{s7.3}
One of the first non-trivial special cases of the problems under 
consideration appears, when the curve is a plane projective M-cubic and 
the divisor is cut on it by another cubic curve. This 
special case was considered by Ay\c{s}eg\"ul \"Ozt\"urkalan \cite{Ay}.
 
Let $X$, $X_0$, $X_1$ and $c$ be as in section \ref{s7.2}. 
Each real line passing through $c$ intersects
$X_0$ in two points and $X_1$ in one point. This defines a two-fold
covering $X_0\to X_1$. Choose on $X_0$ and $X_1$ orientations such
that with respect to them the covering is of degree $+2$. (These orientations 
are defined up to simultaneous reversing, they are the complex
orientations of $\R X$.)

Let $A$ be a set that is cut on $\R X$  
by other real cubic curve $Y$. Denote $X_i\cap A$ by $A_i$.
By the Bezout Theorem, $A$ may consist of at most 9 points. 
Assume that $A$ {\em consists of exactly 9 points.}
One can easily see that the number of points in $A_0$ is even, and number 
of points and $A_1$ is odd.

 Points of $A_i$ are cyclically ordered by their 
position on $X_i$ according to the orientation. 
\def\ah{Latex[length=1.9mm,width=1.6mm]}

\begin{figure}[htb]
\begin{center}
\begin{tikzpicture}[scale=.9,>=latex]
\filldraw(-.5,0)circle(1pt);
\draw[-](-1.5,-.4)--(2.0,1);
\draw[domain=-1:0,smooth=200,very thick,blue!70!black] plot({\x},{-((\x)^3-\x)^.5});
\draw[->,very thick,blue!70!black](-0.01,-.05)--(-0.01,.07);
\draw[<-,very thick,blue!70!black](-.99,-.05)--(-.99,.07);
\draw[->,very thick,blue!70!black](1.01,-.05)--(1.01,.07);
\node at(-1.4,.2){{\color{blue!70!black}$X_0$}};
\node at(1.05,1.3){{\color{blue!70!black}$X_1$}};
\node at(-.3,-.1){$c$};
\draw[domain=-1:0,smooth=200,very thick,blue!70!black] plot({\x},{((\x)^3-\x)^.5});
\draw[domain=1:1.4,smooth=200,very thick,blue!70!black] plot({\x},{-((\x)^3-\x)^.5});
\draw[domain=1:1.6,smooth=200,very thick,blue!70!black,->] plot({\x},{((\x)^3-\x)^.5});
\end{tikzpicture}
\hspace*{1cm}
\begin{tikzpicture}[scale=.9,>=latex]
\draw[domain=-1:0,smooth=200,very thick,blue!70!black] plot({\x},{-((\x)^3-\x)^.5});
\node at(-1.3,-.2){{\color{blue!70!black}$X_0$}};
\node at(1.05,1.3){{\color{blue!70!black}$X_1$}};
\node at(2.3,.8){{\color{green!30!black}$\R Y$}};
\draw[domain=-1:0,smooth=200,very thick,blue!70!black] plot({\x},{((\x)^3-\x)^.5});
\draw[domain=1:1.4,smooth=200,very thick,blue!70!black] plot({\x},{-((\x)^3-\x)^.5});
\draw[domain=1:1.6,smooth=200,very thick,blue!70!black] plot({\x},{((\x)^3-\x)^.5});
\draw[domain=-.58:.61,smooth=200,thick,green!50!black] plot({38*(\x+.5)*(\x-.5)*\x},{\x});
\filldraw(1.12,.55)circle(1pt);
\filldraw(1.08,-.425)circle(1pt);
\filldraw(1.01,-.12)circle(1pt);
\filldraw(0,0)circle(1pt);
\filldraw(-.247,.482)circle(1pt);
\filldraw(-.875,.444)circle(1pt);
\filldraw(-.985,.111)circle(1pt);
\filldraw(-.81,-.534)circle(1pt);
\filldraw(-.285,-.515)circle(1pt);
\end{tikzpicture}
\hspace*{1cm}
\begin{tikzpicture}[scale=.9,>=latex]
\draw[domain=-1:0,smooth=200,very thick,blue!70!black] plot({\x},{-((\x)^3-\x)^.5});
\node at(-.5,-.1){{\color{blue!70!black}$X_0$}};
\node at(1.05,1.3){{\color{blue!70!black}$X_1$}};
\draw[domain=-1:0,smooth=200,very thick,blue!70!black] plot({\x},{((\x)^3-\x)^.5});
\draw[domain=1:1.4,smooth=200,very thick,blue!70!black] plot({\x},{-((\x)^3-\x)^.5});
\draw[domain=1:1.6,smooth=200,very thick,blue!70!black] plot({\x},{((\x)^3-\x)^.5});
\filldraw(1.12,.55)circle(1pt);
\draw[-\ah](1.01,-.12) to [out=130,in=-90] (.82,.23) 
to [out=90,in=210] (1.12,.55);
\filldraw(1.01,-.12)circle(1pt);
\draw[-\ah](1.08,-.425) to [out=180,in=-90] (.88,-.31) 
to [out=90,in=220] (1.01,-.12);
\filldraw(1.08,-.425)circle(1pt);
\draw[-\ah](1.12,.55) to [out=-30,in=90] (1.5,0.05) 
to [out=-90,in=30] (1.08,-.425);
\filldraw(0,0)circle(1pt);
\draw[-\ah](0,0) to [out=40,in=-90] (.15,.3) 
to [out=90,in=10] (-.247,.482);
\filldraw(-.247,.482)circle(1pt);
\draw[-\ah](-.247,.482) to [out=110,in=0] (-.58,.8) 
to [out=180,in=100](-.875,.444) ;
\filldraw(-.875,.444)circle(1pt);
\draw[-\ah](-.875,.444) to [out=170,in=70] (-1.17,.34) 
to [out=260,in=160] (-.985,.111);
\filldraw(-.985,.111)circle(1pt);
\draw[-\ah](-.985,.111) to [out=200,in=100] (-1.22,-.34) 
to [out=280,in=190] (-.81,-.534);
\filldraw(-.81,-.534)circle(1pt);
\draw[-\ah](-.81,-.534) to [out=-90,in=180] (-.55,-.8) 
to [out=0,in=-90] (-.285,-.515);
\filldraw(-.285,-.515)circle(1pt);
\draw[-\ah](-.285,-.515) to [out=-30,in=-110] (.16,-.34) 
to [out=70,in=-45] (0,0);
\end{tikzpicture}

\end{center}
\label{f1}
\end{figure}

A continuous change of $Y$ forces $A$ to move along $\R X$. We consider only 
those changes of $Y$ during which $\R Y$ stays transversal to $\R X$. Assume 
that $A$ moved for a while and came back to its original position. This 
gives rise to a monodromy permutation $A\to A$, which splits to permutations 
$A_0\to A_0$ and $A_1\to A_1$. The cyclic order in $A_i$ defined by the 
complex orientation of $X_i$ does not change. So, the permutations 
$A_i\to A_i$ are cyclic.   Ay\c{s}eg\"ul \"Ozt\"urkalan \cite{Ay} proved 
that 

{\it If none of
$A_i$ is empty, then the permutations on $A_0$ and $A_1$ are closely 
related to each other:
the pair of permutations are realized if and only if they are the same 
powers of the cyclic 
permutations moving each point of $A_i$ to the next one along the complex
orientation of $X$. 

If $A_0=\varnothing$, then only the identity permutation 
of $A_1$ is realizable.} 

These restrictions on the permutations follow from Theorem
\ref{Th1} (a) and (b) of this paper and the relation between the monodromy 
and tracing described in section \ref{s3.3} above.  

\subsection{Separating morphisms}\label{s7.4}
In the example of section \ref{s7.2} the projection $X\to P^1$ from 
point $c$ is such
that the preimage of $\R P^1$ is $\R X$. Morphisms of real algebraic curves 
with this property are called {\em separating.\/} The name was motivated by 
the fact that existence of a separating morphism $X\to P^1$ implies that $X$
is of type I, i.e., $\R X$ separates halves of $\C X$ from each other.

Originally separating morphisms were used to prove that the curve under
consideration is of type I. For example, if $X$ is a real plane projective 
curve of degree $d$, which has a nest of depth $[\frac{d}2]$, then
projection from any real point which surrounded by ovals of the nest is a
separating morphism. Hence $X$ is of type I. 

Gabard \cite{Gab06} proved that existence of a separating morphism is
necessary and sufficient condition for a curve being of type I. 

Notice that if $f:X\to P^1$ is a separating morphism, then the family 
$\{f^*(y)\mid y\in\R P^1\}$ consists of
purely real simple divisors. For any $D\in\{f^*(y)\mid y\in\R P^1\}$, the
image of $\Re\tr:\pi_1(\Def(D))\to H_1(\R X)$ is generated by the class 
of a complex orientation of $X$.  

\begin{rem}\label{rem4.2.1} 
In all examples that I am aware about, if $D$ is a simple purely real
divisor on a real non-singular projective curve $X$ such that the 
$\Re\tr:\pi_1(\Def(D))\to H_1(\R X)$ is not trivial, there exists  
a separating morphism $f:X\to P^1$ such that $D=f^*(y)$ for some 
$y\in\R P^1$. Is this always true?
\end{rem}

\subsection{Looking at the dual curve}\label{s7.5}
In the example of section \ref{s7.2}, the divisors linear equivalent to 
$D$ are cut on $X$ by lines. Lines form the projective plane
$\check P^2$ dual to $P^2$. 
Thus there is a bijection between $\Leq(D)$ and 
$\check P^2$. 

 Non-simple divisors in $\Leq(D)$ correspond to 
points of the curve $\check X$ dual to $X$. Simple divisor in $\Leq(D)$
correspond to points of its complement $\C\check P^2\sminus\C\check X$. 
The set $\Def(D)$ is a connected
component of the complement of  $\R\check X$ in the 
$\R\Leq(D)$.  Let us look at the curve $\R\check X$.

\begin{center}
\includegraphics[scale=.23]{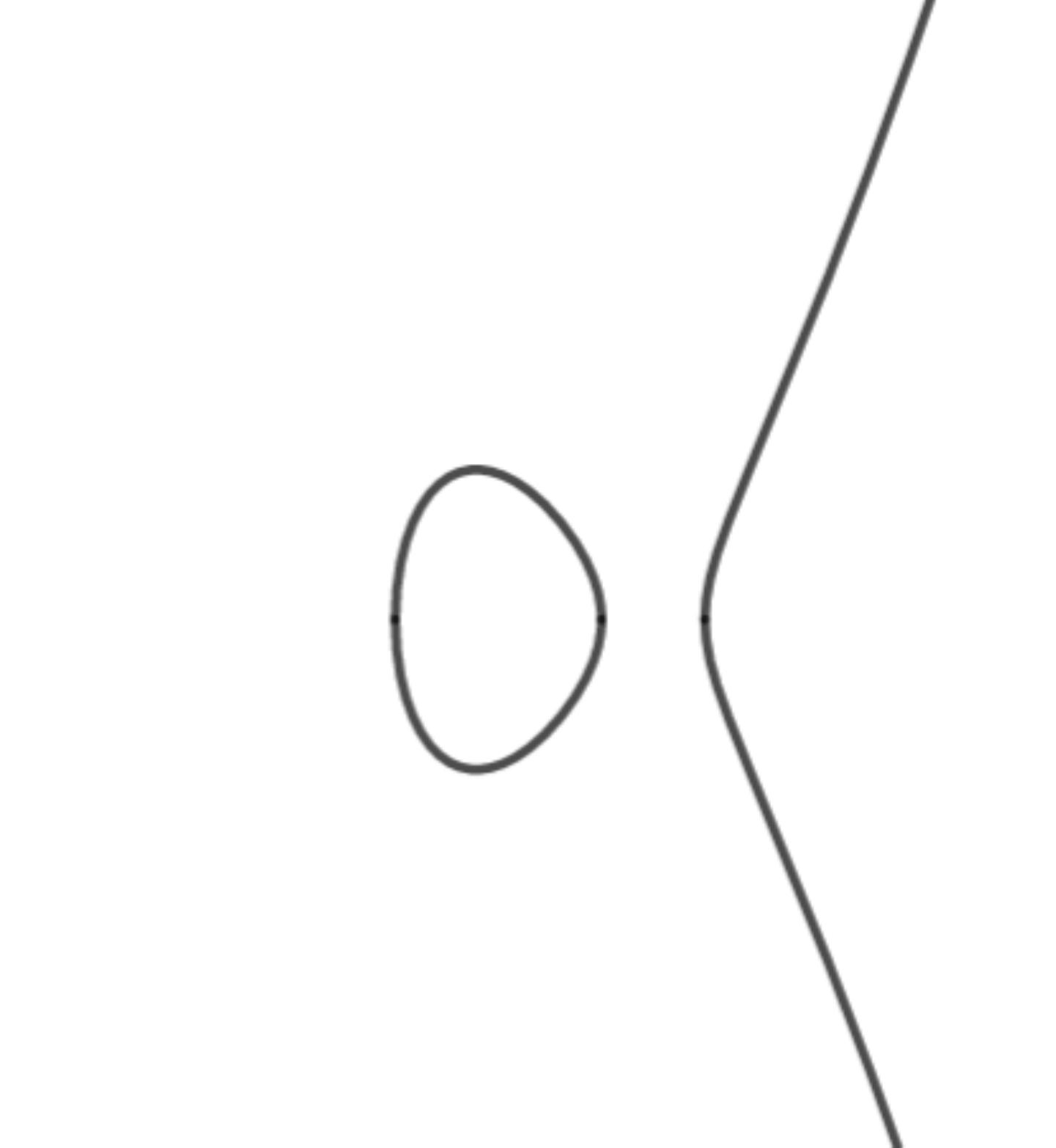}\hspace*{2cm}
\includegraphics[scale=.2]{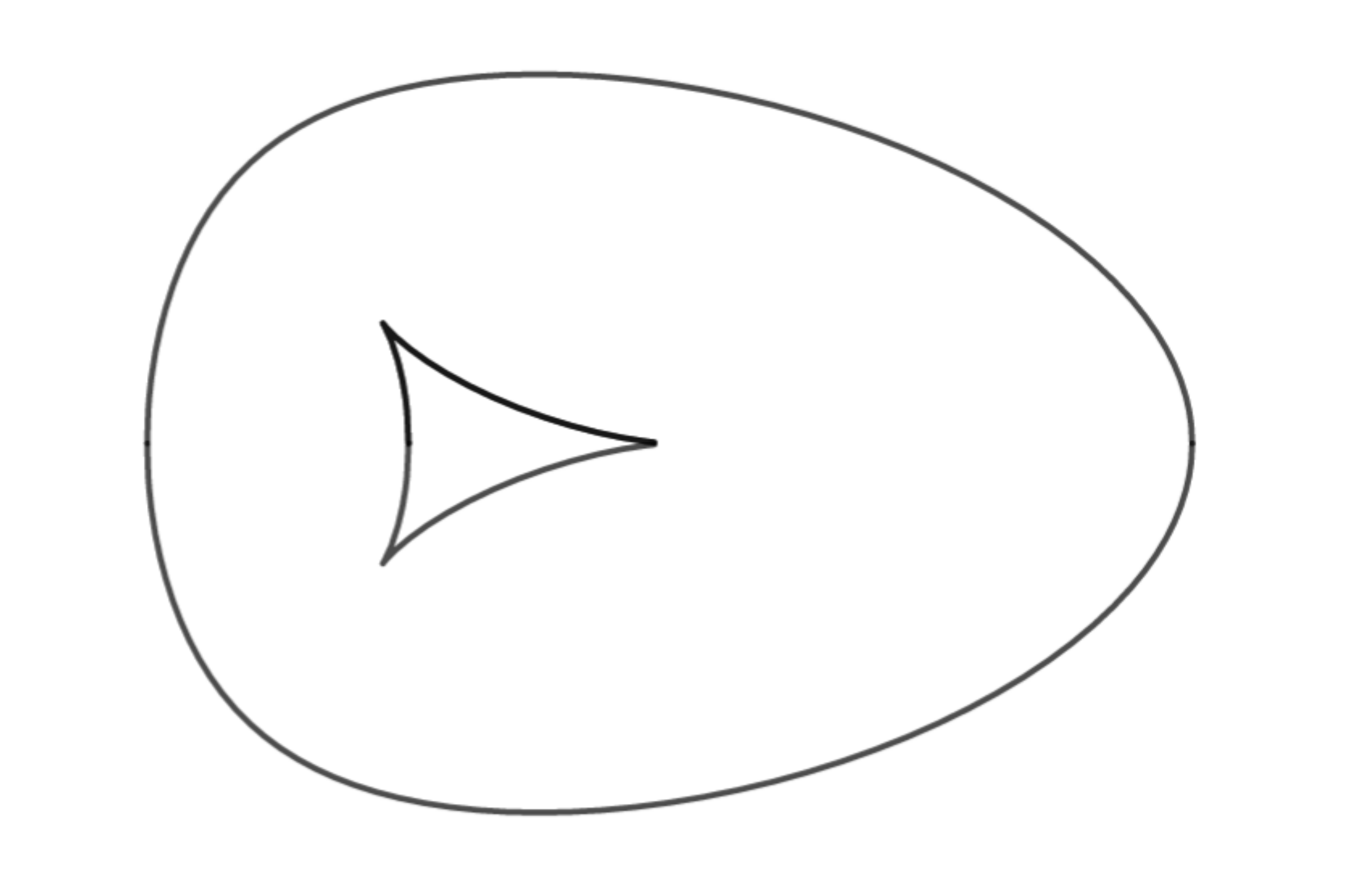}\\
M-cubic \hspace*{4cm} the dual curve
\end{center}
$\R\check X$ consists of two connected components. The component $\check
X_1$ dual to the one-sided component $X_1$ has three cusps (which
correspond to inflection points on $X_1$). The component $\check X_0$ dual
to the oval $X_0$ is an oval enclosing $\check X_1$. 

Each connected component of  $\R\check P^2\sminus\R\check X$ is $\Def(E)$
for a divisor $E$ corresponding to any of its points. 
The non-orientable component of $\R\check P^2\sminus\R\check X$ is $\Def(D)$. It contains 
lines (in particular, the infinity-line) which correspond to pencils of line 
passing through points encircled by $X_0$. The fundamental group of this
component is $\Z$. Points of this connected
component of $\R\check P^2\sminus\R\check X$ correspond to 
purely real divisors each of which consists of two points on $X_0$ and a
point on $X_1$. 

The component enclosed in $\check X_1$ consists
of purely real divisors, which are collinear triples of points on $X_1$.
The fundamental group of this component is trivial.

The third component is bounded by both  $\check X_0$ and $\check X_1$.
It consists of divisors with a single real point and two complex conjugate
points. The fundamental group of this component is $\Z$. The image of 
real tracing homomorphism is generated by $2[X_1]$. 

This example illustrates part (a) of Theorem \ref{Th2}. Indeed, in this
example a real divisor on an
M-cubic curve cut on it by a real line intersecting the one-sided component 
in a single point and missing the oval, and the image of $\Re\tr$ is 
generated by $2[X_1]$.  

\subsection{A generalization of dual curve}\label{s7.6} Let $X$ be a
non-singular real projective curve and $D$ a simple real divisor on $X$. 
The variety $\Leq(D)$ of divisors linear equivalent to $D$ is a real 
projective space. Its complex points are divisors linear equivalent to $D$. 
Non-simple divisors form a real algebraic hypersurface of $\Leq(D)$. 
In section \ref{s7.5} this hypersurface was identified with curve 
$\check X$ dual to $X$.

In general, when $\dim \Leq(D)>2$, we still have a real projective space
$\R\Leq(D)$ and a real projective variety $\GS\Leq(D)$ of non-simple
divisors linear equivalent to $D$. The complement
$\R\Leq(D)\sminus\R\GS\Leq(D)$ is an open set, its connected components 
are the spaces of deformation $\Def(E)$ for real simple divisors $E$
linear equivalent to $D$. Of course, a knowledge on geometry of $\GS\Leq(D)$
may be quite valuable for understanding of $\Def(E)$.

\end{document}